\newtheorem{theorem}{Theorem}
\newtheorem*{theorem*}{Theorem}
\newtheorem{lemma}[theorem]{Lemma}
\newtheorem{claim}[theorem]{Claim}
\theoremstyle{definition}
\newtheorem{definition}[theorem]{Definition}
\newtheorem{remark}[theorem]{Remark}
\newcommand{\setof}[1]{\left\{#1\right\}}
\newcommand{\abs}[1]{\left\vert#1\right\vert}
\newcommand{\floor}[1]{\left\lfloor#1\right\rfloor}
\newcommand{\ceil}[1]{\left\lceil#1\right\rceil}
\newenvironment{subproof}[1][\proofname]{%
  \begin{proof}[#1]%
}{%
  \end{proof}%
}
\newcommand{\authorlogan}{
\author{Logan Crone}
\address{Logan Crone, University of North Texas, Department of Mathematics, 
1155 Union Circle \#311430, Denton, TX 76203-5017, USA}
\email{lcronest@gmail.com}
}
\newcommand{\authorcameron}{
\author{Cameron Bernstein}
\address{Cameron Bernstein, University of North Texas, Department of Mathematics, 1155 Union Circle \#311430, Denton, TX 76203-5017, USA}
\email{cameronbernstein@my.unt.edu}
}
\newcommand{\authorsydney}{
\author{Sydney Le}
\address{Sydney Le, University of North Texas, Department of Mathematics, 1155 Union Circle \#311430, Denton, TX 76203-5017, USA}
\email{sydneyle@my.unt.edu}
}
\newcommand{\authorlexie}{
\author{Alexandria Kwon}
\address{Alexandria Kwon, University of North Texas, Department of Mathematics, 1155 Union Circle \#311430, Denton, TX 76203-5017, USA}
\email{kwonlexie@gmail.com}
}
\keywords{Games, Combinatorics}
\title{Sneaky Angel and Devil Game}
\thanks{The authors would like to thank the Math Department of the University of North Texas's Incubator project for its encouragement and guidance.}
\begin{document}

\maketitle

\begin{abstract}
In this paper we introduce an imperfect information variant of the Angel and Devil game, which was first introduced in 1982 by Berlekamp, Conway, and Guy.  The Devil player has a winning strategy in this game, but the main problem they pose is whether this changes if the Angel player is allowed two moves for every one of the Devil.  This and many other variants of the game have been considered, and the original question was only solved in 2007, when it was shown that the so-called \emph{power 2} Angel could win.  Our main theorem, Theorem~\ref{mainthm}, is that the Devil player wins the imperfect information variant of the game. This generalizes the result that the Devil player has a winning strategy in the original Angel and Devil game.
\end{abstract}
\section{Introduction}
In mathematics, a game is characterized by the set of allowed moves and allowed positions that which produces sequences of moves resulting in wins, losses, or draws for each of the players.  Most games consist of two players alternating playing various moves. Games can be finite, where play ends once a terminal position has been reached, or can be infinite, with the players producing an infinite sequence of moves.  One can give a formal definition of a game in many different ways, and the formal details of the definition seldom affect the statements of theorems and proofs involved. The main property of interest in studying games is the existence of winning strategies.  A \emph{strategy} is a function which, given a finite sequence of moves played so far, will output the next move to play.  A \emph{winning strategy} is a strategy for one of the players that which will always result in a win for that player.  A game is then said to be \emph{determined} if one of the players has a winning strategy.

Games of \emph{perfect information} are those in which all players have full knowledge of the rules and payoff, as well as a complete list of all moves played by both players in each round.  In games of \emph{imperfect information}, some information is not revealed to the players.  In most games of imperfect information, it is the opponent's moves that are hidden.  Games with imperfect information are more likely to mirror interactions in reality, but are also more difficult to analyze, as the notion of a strategy must be adapted to this context as well.  The only type of imperfect information game we consider here is one in which one of the player's moves are hidden for some number of rounds from the other player.  For our purposes, a \emph{strategy} in this game will be playing deterministically (not probabilistically) against only the moves which have been revealed.  So for such a strategy to be a \emph{winning strategy}, it must win against all possible moves by the opponent.

In 1982, Berlekamp, Conway, and Guy \cite{BCG1982} introduced a game of perfect information in which one of the players is moving a piece on an infinite chess board, winning only if they are able to keep moving forever, while the other player opposes the first by damaging the board and attempting to trap their piece.  They posed a problem regarding which player has a winning strategy in the case that the piece is allowed to make two chess-king moves every round.  This problem stood unsolved for many years, with Conway \cite{Conway1996} revisiting the problem and Winkler \cite{Winkler2006} popularizing it.  Some progress was made by Kutz \cite{Kutz2004} in 2004, with the problem being fully answered in 2007 by Bowditch \cite{Bowditch2007} and M{\'a}th{\'e} \cite{Mathe2007}.

If a game of perfect information is sufficiently simple, then it is known that one of the players must have a winning strategy and is thus said to be determined, and the abstract study of which games are determined turned out to be a deep and rich theory.  This investigation into the determinacy of abstract games began in 1953 with Gale and Stewart \cite{GaleStewart1953}, culminating in 1975 with Martin \cite{Martin1975}.  The game introduced by Berlekamp, Conway, and Guy is simple enough to be determined by the theorem of Gale and Stewart, an equivalent statement of which we give here.

\begin{theorem*}[Gale and Stewart \cite{GaleStewart1953}]
Suppose $G$ is a game of perfect information in which one of the players wins all infinite plays and the other player wins all finite plays, then $G$ is determined.
\end{theorem*}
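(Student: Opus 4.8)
The plan is as follows. Either the player who wins all finite plays already has a winning strategy, in which case $G$ is determined and there is nothing to prove, or that player has no winning strategy, in which case I will produce a winning strategy for the player who wins all infinite plays; either way $G$ is determined. To fix notation, let $A$ be the player who wins every infinite play of $G$ and let $B$ be the player who wins every finite play --- that is, every play that reaches a terminal position --- and assume from now on that $B$ has no winning strategy from the initial position.

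Call a finite partial play $p$ \emph{surviving} if $B$ has no winning strategy in the subgame of $G$ that begins from $p$. By assumption the empty partial play is surviving, and no terminal position is surviving: a terminal position is a finite play, hence won by $B$, so $B$ trivially has a winning strategy in the (empty) subgame beginning there. The first key step is to establish two properties of surviving positions. (i) If $p$ is surviving and it is $A$'s turn at $p$, then $A$ has a legal move from $p$ to a surviving partial play --- in particular $A$ does have a legal move, since $p$ is not terminal --- for otherwise every legal move of $A$ at $p$ would reach a partial play from which $B$ has a winning strategy, and then $B$ could win from $p$ by waiting for $A$'s move and following the corresponding winning strategy, contradicting that $p$ is surviving. (ii) If $p$ is surviving and it is $B$'s turn at $p$, then every legal move of $B$ from $p$ reaches a surviving partial play, since otherwise $B$ could win from $p$ by playing a move to a non-surviving partial play and thereafter following a winning strategy there. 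Using (i) to choose $A$'s moves defines a strategy $\sigma$ for $A$, and a routine induction on the length of the partial plays consistent with $\sigma$ --- the base case being the empty play and the inductive step being supplied by (i) at $A$'s turns and by (ii) at $B$'s turns --- shows that every partial play consistent with $\sigma$ is surviving.

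It remains to check that $\sigma$ wins for $A$. Since every partial play consistent with $\sigma$ is surviving while no terminal position is surviving, no play consistent with $\sigma$ ever reaches a terminal position; hence every such play is infinite, and so is won by $A$. Thus $\sigma$ is a winning strategy for $A$, and $G$ is determined. The argument is not symmetric in $A$ and $B$: it is essential that $B$'s winning condition --- reaching a terminal position --- is one that, once satisfied, cannot be undone, so that avoiding it for all finite time suffices to avoid it forever.

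The step I expect to be the main obstacle --- indeed the only point beyond routine bookkeeping --- is making the ``wait and follow the corresponding winning strategy'' constructions in (i) and (ii) fully rigorous: each amounts to assembling one strategy for $B$ out of a family of strategies indexed by the possible next moves, and this splicing is precisely where perfect information is used, since $B$ must be able to see which move was played in order to pass to the right strategy. Carrying it out cleanly requires committing to the paper's formalization of games, positions, subgames, and strategies, together with the convention that a play admitting no legal continuation is finite (equivalently, ends at a terminal position); granting those conventions, (i) and (ii) are routine.
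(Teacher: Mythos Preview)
Your argument is the standard and correct proof of open determinacy (equivalently, the Gale--Stewart theorem in the form stated). However, the paper does not supply its own proof of this statement: the theorem is quoted in the introduction as a classical result and attributed to \cite{GaleStewart1953}, with no proof given anywhere in the paper. So there is nothing in the paper to compare your proof against; your write-up simply fills in what the authors left as a citation.
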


In Section \ref{S:defgame}, we first give the definition of the \emph{Angel and Devil game} originally introduced by Berlekamp, Conway, and Guy \cite{BCG1982}.  We then also define the imperfect information variant of the game, and it is for this variant we show that the Devil player has a winning strategy.

In Section \ref{S:mainresults}, we prove our results.  To do this we first consider restricted Angels which can only move upwards.  We define a sequence of strategies $\sigma_n$ which are able to capture these weakened Angels with increasing efficacy as $n \to \infty$ (this is made precise by Lemma~\ref{orange}).  It is this increasing efficacy that allows us to defeat Angels even with limited information.  We then adapt these strategies $\sigma_n$ to capture Angels that are not restricted, again showing that as $n \to \infty$, we can still win with less information about the Angel's moves.

Throughout this paper we use $\ceil{x}$ to denote the smallest integer which is greater than or equal to $x$, and $\floor{x}$ to denote the greatest integer which is less than or equal to $x$.

\section{Definition of the game}\label{S:defgame}
In this section, we give the formal definition of the original Angel and Devil game, as well as the variant on which we focus our attention on in this paper.

\begin{definition}[Angel and Devil game]
This is a game played on an infinite chess board, i.e. with \emph{squares} indexed by $\mathbb{Z}\times\mathbb{Z}$.  There are two players, the \emph{Angel player} and the \emph{Devil player}.  The Angel player controls an \emph{Angel piece} on the board, which moves like a chess king (see Figure~\ref{unresmove}).  For example if the Angel is on square $(0, 0)$, then the legal moves are to the squares
\[\{(-1, 1), (0, 1), (1, 1), (-1, 0), (1, 0), (-1, -1), (0, -1), (1, -1)\}\]

\begin{figure}
\begin{tikzpicture}
\def\gridlength{2}
\def\gridheight{2}
\draw [step=1 cm, draw=black!50!white, thin, fill=none] (-1,-1) grid (\gridlength, \gridheight);

\node (11) [inner sep=0.05cm] at (0.5, 0.5) {\boardfont k}; 
\coordinate (00) at (-0.5, -0.5);
\coordinate (01) at (-0.5, 0.5);
\coordinate (02) at (-0.5, 1.5);
\coordinate (10) at (0.5, -0.5);
\coordinate (12) at (0.5, 1.5);
\coordinate (20) at (1.5, -0.5);
\coordinate (21) at (1.5, 0.5);
\coordinate (22) at (1.5, 1.5);

\draw [->] (11) -- (00);
\draw [->] (11) -- (01);
\draw [->] (11) -- (02);
\draw [->] (11) -- (10);
\draw [->] (11) -- (12);
\draw [->] (11) -- (20);
\draw [->] (11) -- (21);
\draw [->] (11) -- (22);

\end{tikzpicture}
\caption{Legal moves for the Angel piece.}\label{unresmove}
\end{figure}
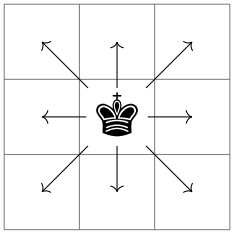

The Devil player, on each turn, \emph{deletes} a square from the board.  If a square is deleted, then the Angel player may no longer move to that square.
The Angel player is declared the winner if there is always some legal move for them to make.  The Devil player is the winner otherwise, i.e. the Devil player wins if they can eventually trap the Angel piece.  

Our convention is that the Angel piece starts on square $(0, 0)$, and that the Devil player \emph{may} delete the square which is occupied by the Angel, but the game only ends if the Angel has no legal move.  The Angel must move every turn and is \emph{not} allowed to pass.
\end{definition}

Next, we will define the variant of the Angel and Devil game to which our results apply. This variant is not a game of perfect information, and so we need to address what we mean by a \emph{strategy}, as well.

\begin{definition}[Sneaky Angel and Devil game]\label{sneakygame}
This is a game played exactly as the Angel and Devil game, but with a fixed parameter $s \in \setof{0, 1, 2, \dots}$.  We will say that the Angel is \emph{$s$-sneaky}, and the Devil only obtains in round $s+k$ the knowledge about the Angel piece's position on the board in round $k$.

For the sake of definiteness of how the game proceeds, we will have that the Angel piece starts on $(0,0)$ and the Angel player is allowed to make $s$ many moves in the first round without any interference from the Devil.  The Devil then deletes a square for the first time with no knowledge of the Angel's moves.  Then the Angel is allowed to move once more, and the Devil then gains the information about the Angel's first move, and is allowed to delete another square.  The game continues with each new move by the Angel revealing one more move to the Devil.

A \emph{strategy} for this game is just a function that takes the Devil's knowledge in round $k$ and decides (deterministically) a move for the Devil for that round.
\end{definition}

\section{Main Results}\label{S:mainresults}

In this section, we will first consider a restricted game in which the Angel can only move in one direction.  We will restrict the Angel player to always increase the $y$-coordinate of their Angel piece on every move. 
We define a sequence $\sigma_n$ of strategies that will guarantee capture of this upwards-only Angel with increasing effectiveness as $n$ increases (see Lemma~\ref{orange} for a more precise statement).  While it is relatively easy to compute by brute force that $\sigma_5$ will capture the original Angel, we will need $\sigma_n$ for arbitrarily large $n$ to capture the $s$-sneaky Angel where $s$ is large.

\begin{definition}
We define a sequence of strategies \emph{$\sigma_n$} which will prescribe moves based only on the Angel's last known position and the squares deleted so far.  We will be deleting only squares of the form $(a, n)$, with $y$-coordinate equal to $n$. If the Angel's last known position is $(x,k)$ after making $k$ many moves, we will have then deleted $k$-many squares on the row of squares with $y$-coordinate $n$. The strategy $\sigma_n$ is played in two stages: We will have one algorithm for the first $\ceil{n/2}$ many rounds, and another for the following $\floor{n/2}$ rounds.  We will define $\sigma_n$ to choose and delete the square satisfying the conditions we set for each stage.
For rounds $k+1 < \ceil{n/2}$, if we've deleted squares $\{(a_i, n)\}_{i \leq k}$ we will choose a square $(a_{k+1}, n)$ satisfying all of three of the following conditions:
\begin{enumerate}
\item $\abs{a_{k+1} - a_{i}} > 1$ for any $i \leq k$ 
\item $\abs{a_{k+1} - x} \leq \abs{b-x}$ for any $b$ so that $\abs{b - a_{i}}>1$ for every $i \leq k$
\item $a_{k+1} = \min\{b: b\ \text{satisfies Conditions (1) and (2)}\}$
\end{enumerate}

Condition (1) requires that we never play any square adjacent to one we've already deleted.  Condition (2) requires that we play as close to centered above the Angel's last known position as possible, and Condition (3) requires that we choose the leftmost square, if more than one square satisfies condition (1) and (2).

In order to define our strategy for later rounds, we need to do a simple computation when the Angel piece is halfway to row $n$.  We compute whether the Angel is currently on the left side or on the right side of the center of the collection of deleted squares.  More precisely, suppose we've deleted squares $\{(a_i, n)\}_{i\leq \ceil{n/2}}$, and the Angel is on square $(x, \ceil{n/2})$.  Let $c=\frac{1}{\ceil{n/2}} \sum^{\ceil{n/2}}_{i=1} a_i$.  If $x>c$, then we'll say \emph{the right side is light}, indicating that we have fewer deleted squares to the right of the Angel, otherwise we'll say that \emph{the left side is light}.

For rounds $k+1 \geq \ceil{n/2}$, if we've deleted squares $\{(a_i, n)\}_{i \leq k}$, and the Angel was last known to be at position $(x, k)$ we will choose a square $(a_{k+1}, n)$ satisfying both of the following conditions:
\begin{enumerate}
\item $\abs{a_{k+1} - x} \leq \abs{b-x}$ for any $b$ so that the square $(b, n)$ has not already been deleted. 
\item If there are two moves satisfying Condition (1), then we'll pick the one which is on the side which is light.
\end{enumerate}
In other words, we simply delete the square which is closest to the Angel piece's current $x$ coordinate, tending towards the side which had fewer squares deleted in round $\ceil{n/2}$, if there are two possible choices.
\end{definition}

We prove our main theorem by proving several lemmas about the strategies $\sigma_n$, and then adapting them to use against an unrestricted sneaky Angel.

\begin{theorem}\label{mainthm}
Let $s \in \mathbb{N}$.  The Devil has a winning strategy in the game in which the Angel is $s$-sneaky.
\end{theorem}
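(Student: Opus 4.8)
The plan is to bootstrap from the behaviour of the strategies $\sigma_n$ against the upwards-only Angel. By Lemma~\ref{orange}, $\sigma_n$ traps the upwards-only Angel as it reaches row $n$, and --- this is the content of ``increasing efficacy'' --- it does so with a margin that grows without bound in $n$: the block of squares deleted by $\sigma_n$ on row $n$ overhangs the Angel's forced crossing point by an amount tending to infinity with $n$. Since the Angel moves like a king, a delay of $s$ rounds in the Devil's information displaces the Angel's true horizontal position by at most $s$ from the position $\sigma_n$ is reacting to, so the only effect of $s$-sneakiness is to shift that block by at most $s$. Hence, once $n$ is large enough relative to $s$, the block still covers the true crossing point, and $\sigma_n$ defeats the $s$-sneaky \emph{upwards-only} Angel. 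The real work is therefore to remove the restriction that the Angel move only upwards.

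For that I would fix such an $n$ and let $Q = \setof{(x,y) \suchthat \max(\abs{x},\abs{y}) \le n}$, a finite region containing the Angel's starting square, whose top side $\setof{(x,n) \suchthat \abs{x}\le n}$ is precisely the kind of row on which $\sigma_n$ operates. The Devil's strategy will wall off all four sides of $Q$ using four interleaved copies of $\sigma_n$ --- one per side, each a rotated copy of another and each tracking the Angel's (delayed) position --- while arranging that some progress is made on every side even when the Angel ignores one of them. The key geometric fact is that to move from the vicinity of one side of $Q$ to the vicinity of a different side, the Angel must cross the interior, which costs it $\Omega(n)$ moves; during such a crossing the Devil aims its deletions at the side the Angel is heading for and, exactly as in the upwards-only analysis, builds a block of roughly $n$ deleted squares centred on the Angel's approach --- which, by Lemma~\ref{orange} with the margin absorbing both the delay $s$ and the bounded losses incurred at corners and when switching sides, is solid around wherever the Angel actually arrives. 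When the Angel instead slides along a side, the same rule keeps deleting squares just ahead of it, so it can never step across. Consequently the Angel, having started inside $Q$, is confined to $Q$ for the whole game.

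It then remains to turn confinement into a win. Since $Q$ has only $(2n+1)^{2}$ squares, once the boundary is complete the Devil abandons wall-building and simply deletes the remaining squares of $Q$ one at a time; after finitely many further rounds every square of $Q$ is gone, and since the Angel could never have left $Q$ it now has no legal move. This is also why the Devil must be forbidden from stalling: the point of insisting that it keep making progress on all four sides, rather than only on the side the Angel currently threatens, is to stop the Angel from surviving indefinitely by idling near the centre while one wall is never finished.

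The main obstacle is the middle step --- the bookkeeping for the four coordinated walls against an adversarial $s$-sneaky Angel. I expect the hard part to be showing that the worst the Angel can do --- feinting toward a side to draw deletions there and then retreating, sliding along a side, shuttling between sides, all while exploiting the Devil's $s$-round blindness --- still cannot open a gap in some side at the moment the Angel reaches it, nor postpone completion of the boundary forever. This is exactly where the quantitative strength of Lemma~\ref{orange} enters: choosing $n$ large compared with $s$ makes the per-side margin, which is of order $n$, dominate every lower-order loss in the argument, so that however the Angel plays, each side of $Q$ is solid by the time the Angel is there and the boundary is finished after finitely many rounds, at which point the clean-up above applies.
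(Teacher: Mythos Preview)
Your overall architecture---confine the Angel to a finite box, then fill it in---matches the paper's, and your use of Lemma~\ref{orange} to absorb the $s$-round delay is exactly right. The gap is in how the confinement is achieved. You propose running four rotated copies of $\sigma_n$, one per side of the box $Q=[-n,n]^2$, interleaved so that ``some progress is made on every side.'' But the analysis of $\sigma_n$ (Lemmas~\ref{lemon}--\ref{orange}) requires one Devil deletion per unit of Angel progress toward row $n$; if the Devil is splitting its moves among four walls, each wall receives only a fraction of what the lemmas need, and the margin from Lemma~\ref{orange} evaporates. Focusing on a single threatened side doesn't fix this either: $\sigma_n$ is a two-phase procedure keyed to the Angel's round count, with no mechanism for pausing, resuming, or restarting after the Angel feints and retreats, and nothing handles the corners where two walls must meet. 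Finally, your $Q$ is simply too small: the Angel reaches its boundary in $n$ moves---the same $n$ moves a \emph{single} $\sigma_n$ needs to finish one wall---so there is no slack to coordinate four of them.

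The paper avoids all of this with a different decomposition. It takes a much larger box, with inner boundary at distance $9n$ and outer at $10n$, and spends the first $8n+4$ rounds \emph{deterministically} (no $\sigma_n$ involved) laying down four L-shaped corner pieces; the box is large enough that the Angel is still strictly inside $[-9n,9n]^2$ when the corners are complete. These corners partition the annulus into four rectangular ``tunnels,'' each already equipped with side walls of height $n$. Now only \emph{one} wall-building strategy is ever active at a time: whenever the Angel's last known position enters a tunnel, the Devil runs the adapted strategy $\hat\sigma_n$ (Lemma~\ref{apple}) in that tunnel alone, at full move-rate, with the pre-built corners supplying the side walls $\hat\sigma_n$ requires. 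When the Angel retreats to the interior, the Devil simply resumes filling $[-10n,10n]^2$ square by square. So the paper never interleaves, never restarts a $\sigma_n$, and never worries about stitching walls at corners; the corners are built first, cheaply, and everything else reduces to the one-tunnel case. That corner-first reduction is the idea your proposal is missing.
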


We first work towards proving that if $n$ is sufficiently large, then $\sigma_n$ will capture the sneaky Angel which can only move upwards.  We do this by proving a few lemmas about the squares which $\sigma_n$ deletes.

Our first lemma is regarding which squares will be deleted in the first half of the first $n$ rounds of the game.  We show that we are deleting only squares with even $x$-coordinates, and that we are, in fact, deleting a consecutive block of such squares.

\begin{lemma}\label{lemon}
Let $k\leq \ceil{\frac{n}{2}}$. Let $A$ be the set of $x$-coordinates of deleted squares after the Devil's $k^{\text{th}}$ move then
\[A = \{\min A + 2i : 0 \leq i \leq k-1\}.\]
\end{lemma}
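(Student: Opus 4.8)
The plan is to induct on $k$. For the base case $k=1$, the Devil has deleted exactly one square, so $A = \{\min A\}$ and the claimed formula holds trivially with $i$ ranging over the empty... actually over $\{0\}$, giving the singleton.

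For the inductive step, suppose after the Devil's $k^{\text{th}}$ move ($k+1 \leq \ceil{n/2}$, so the next move is still governed by the first-stage rules) we have $A = \{a_1, \dots, a_k\}$ with $A = \{m, m+2, m+4, \dots, m+2(k-1)\}$ where $m = \min A$. I want to show that the square $(a_{k+1}, n)$ chosen by Condition (1)--(3) has $x$-coordinate equal to either $m-2$ or $m+2k$, so that the new set is again an arithmetic progression with common difference $2$. The key observation is that Condition (1) forbids exactly the $x$-coordinates in $\bigcup_i \{a_i - 1, a_i, a_i + 1\}$, which, because $A$ is the consecutive even block $[m, m+2(k-1)]$, equals the entire integer interval $[m-1, m+2k-1]$. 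Hence the \emph{allowed} coordinates are precisely $\mathbb{Z} \setminus [m-1, m+2k-1]$, i.e. those $b$ with $b \leq m-2$ or $b \geq m+2k$. Among these, Condition (2) picks the one(s) closest to the Angel's last known $x$-coordinate; since the Angel has taken only $k \leq \ceil{n/2}$ upward steps from $(0,0)$, its $x$-coordinate satisfies $|x| \leq k$, and I will need to check that this forces the minimizer of $|b - x|$ over the allowed set to be either $m-2$ or $m+2k$ (never anything farther out), after which Condition (3) resolves any tie by taking the smaller value. Either way, the updated coordinate set is $\{m-2, m, \dots, m+2(k-1)\}$ or $\{m, m+2, \dots, m+2k\}$, both of which are consecutive even blocks of size $k+1$, completing the induction.

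The step I expect to be the main (if modest) obstacle is verifying that the nearest allowed coordinate to $x$ is always one of the two endpoints $m-2$, $m+2k$ rather than some coordinate strictly further from the block — i.e., ruling out the degenerate possibility that $x$ is so far from $[m-1, m+2k-1]$ that the closest allowed point lies beyond $m-2$ or $m+2k$. This cannot happen because the allowed set is a union of two half-infinite rays abutting $m-2$ and $m+2k$, so for \emph{any} real $x$ the closest allowed integer is $m-2$ (if $x \leq m - 2$, or more generally if $x$ is left of the block's center) or $m+2k$ (otherwise); there is simply no allowed coordinate "in between" that could be closer. Thus Condition (2) automatically selects an endpoint, and the parity and consecutiveness are preserved. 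I should also note explicitly that the hypothesis $k \leq \ceil{n/2}$ guarantees we are still in the first stage of $\sigma_n$ so that Conditions (1)--(3) indeed govern move $k+1$ when $k+1 \leq \ceil{n/2}$; for the single value $k = \ceil{n/2}$ itself the statement is about the set already present after that move, which is covered by the inductive conclusion at the previous step.
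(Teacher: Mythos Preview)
Your proposal has a genuine gap at exactly the step you flag as the ``main (if modest) obstacle.'' You correctly observe that, by the inductive hypothesis, Condition~(1) forbids precisely the integer interval $[m-1,\,m+2k-1]$, so the allowed $x$-coordinates form the two rays $(-\infty,\,m-2]\cup[m+2k,\,\infty)$. But your subsequent claim that ``for any real $x$ the closest allowed integer is $m-2$ or $m+2k$'' is simply false: if, say, $x=m-5$, then $m-5$ is itself allowed and is strictly closer to $x$ than $m-2$ is. In that situation Condition~(2) would select $a_{k+1}=m-5$, destroying the consecutive-even-block structure. The rays are half-infinite, so they contain points arbitrarily close to any $x$ lying outside the forbidden interval; the endpoints $m-2$ and $m+2k$ are nearest to $x$ only when $x$ lies \emph{between} them.

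Thus the real work is to show $m-2\le x\le m+2k$, i.e.\ that the Angel's last known $x$-coordinate cannot have drifted outside the block. The crude bound $|x|\le k$ you mention does not suffice on its own, since it says nothing about where $m=\min A$ sits (for instance, if the block were $\{0,2,\dots,2(k-1)\}$ one would need $x\ge -2$, which $|x|\le k$ does not give). The paper handles this by proving, as an inner claim, that $\min A-1\le x\le\max A+1$: assuming $x>\max A+1$, it looks at which square $(x-1,k-1)$, $(x,k-1)$, or $(x+1,k-1)$ the Angel came from and argues in each case that the Devil's \emph{previous} move (which by induction was the right endpoint $\max A$) would instead have been placed further right, a contradiction. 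This coupling between the Angel's path and the Devil's prior choices is the missing ingredient in your argument.
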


\begin{proof}
We'll prove this by induction on $k$.
For the base case, the only square deleted is $(a_1, n)=(0, n)$, and so \[A=\{a_1\}=\left\{{\min A}\right\}=\left\{{\min A+2i\colon 0\leq i \leq 0}\right\}.\]

For the inductive step, assume that A = $\{\min A + 2i : 0 \leq i \leq k-1 \}$ is true for $k \geq 2$ where $k + 1 \leq \ceil{n/2}$. Suppose that the Angel was last seen on square $(x, k)$ (right before the Devil deletes the square $(a_{k+1}, n)$), and let $c = \frac{1}{2} (\min A + \max A)$.

Notice that the statement of Lemma~\ref{lemon} is equivalent to each next move $(a_{k+1}, n)$, chosen by $\sigma_n$, satisfying 
\[a_{k+1} = 
\begin{cases} 
\min A - 2 & x \leq c\\ 
\max A + 2 & x > c 
\end{cases}\]
To show that this is true, we prove the following claim.
\begin{claim}
$\min A - 1 \leq x \leq \max A + 1$.
\end{claim}
\begin{subproof}
Suppose for the sake of a contradiction that $x > \max A + 1$.  We consider which squares the Angel could have come from: one of $(x-1, k-1)$, $(x, k-1)$, or $(x+1, k-1)$ (see Figure~\ref{teleporting}).  It is impossible for the Angel to have been on $(x + 1, k - 1)$ or $(x, k - 1)$, since then we would have, by definition of $\sigma_n$, deleted the square $(a_k, n) = (\max A +2, n)$ or $(\max A + 3, n)$, since neither is ruled out by being adjacent to a previously deleted square.

\begin{figure} 
\begin{tikzpicture}
\def\gridlength{5}
\def\gridheight{3}
\draw [fill=black!45!white, draw=none] (0,2) rectangle (1, 3);
\draw [step=1 cm, draw=black!50!white, thin, fill=none] (-1,-1) grid (\gridlength, \gridheight);
\draw [white, fill=white] (-0.99, 1.15) rectangle (\gridlength-0.01, 1.85);

\node (32) at (2.5, 1.6) {\vdots}; 
\node (12) at (0.5, 1.6) {\vdots}; 
\node (32) at (1.5, 1.6) {\vdots}; 
\node (12) at (-0.5, 1.6) {\vdots}; 
\node (32) at (3.5, 1.6) {\vdots}; 
\node (12) at (4.5, 1.6) {\vdots}; 

\node (13) at (0.5, 2.5) {\tiny$\max A$}; 
\node (33) at (2.5, 2.5) {\tiny$x$}; 
\node (31) [inner sep=0.05cm] at (2.5, 0.5) {\boardfont k}; 

\draw [->] (1.5, -0.5) -- (31);
\draw [->] (2.5, -0.5) -- (31);
\draw [->] (3.5, -0.5) -- (31);

\end{tikzpicture}
\caption{The case $x > \max A + 1$}\label{teleporting}
\end{figure}
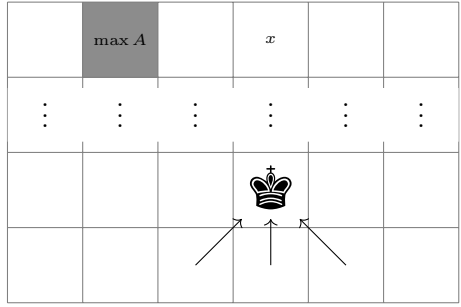

Therefore, the Angel must have come from $(x - 1, k - 1)$. However, since $x - 1 > \max A$, this also leads to a contradiction.  This is because $\max A$ must have been the last square deleted: by the induction hypothesis, the last square deleted must be either $\min A$ or $\max A$, but $\min A$ is clearly further away from $x$ in this case (unless $\min A = \max A$).  But since $\max A$ must have been the last square deleted, it was deleted while the Angel's last known position was $(x-1, k-1)$, but $x-1>\max A$ implies that $x-1$ would have been the square deleted by $\sigma_n$, as it was closer to the Angel's last known position. By a symmetric argument, $x < \min A - 1$ is also impossible.
\end{subproof}
Thus, following $\sigma_n$, either $(\max A + 2)$ or $(\min A - 2)$ is our next move as all other squares in the set $[\min A-1, \max A+1]$ are either adjacent or identical to previously deleted squares in $A$.
\end{proof}

Next, we show that the $\sigma_n$ always keeps the center of this block of consecutive even squares within one square of the Angel's last known position.

\begin{lemma}\label{lime}
Let $k \leq \ceil{n/2}$. Let $c$ be the average of the $x$-coordinates of the set of deleted squares $A$ after the Devil's $k^{\text{th}}$ move playing according to $\sigma_n$, i.e.
\[c = \frac{1}{2} (\min A + \max A)\]
Then if the Angel was last seen on the square $(x, k - 1)$ (i.e. before the Angel's $k^\text{th}$ move is revealed), then $\abs{c - x} \leq 1$.  Thus the Angel's current (hidden) $x$-coordinate is no more than $s+1$ many squares away from $c$.
\end{lemma}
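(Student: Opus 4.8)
The plan is to induct on $k$, reusing both the structural conclusion of Lemma~\ref{lemon} and the case analysis inside its proof. Three preliminary observations make the inductive step essentially mechanical. First, by Lemma~\ref{lemon} the set $A$ of deleted $x$-coordinates after the Devil's $k^{\text{th}}$ move is the arithmetic progression $\{\min A + 2i : 0 \le i \le k-1\}$, so $\max A = \min A + 2(k-1)$ and the center $c = \frac12(\min A + \max A) = \min A + (k-1)$ is always an \emph{integer}. Second, a one-line computation shows that each first-stage deletion moves $c$ by exactly one: appending $\min A - 2$ replaces $c$ by $c-1$, while appending $\max A + 2$ replaces $c$ by $c+1$. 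Third, the proof of Lemma~\ref{lemon} tells us exactly which of the two deletions $\sigma_n$ makes: it deletes $\min A - 2$ when the Angel's last known position $(x,k)$ satisfies $x \le c$, and $\max A + 2$ when $x > c$. In words, the center of the deleted block takes one unit step per round, always in the direction of the Angel's most recently revealed $x$-coordinate.

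Granting these, the induction is short. For the base case $k=1$ we have $A = \{0\}$ (the base case of Lemma~\ref{lemon}), hence $c = 0$; and since the Angel starts at $(0,0)$ its last known position is $(0,0)$, so $\abs{c-x} = 0$. For the inductive step, suppose the bound holds after the $k^{\text{th}}$ move, with the Angel last known at $(x,k-1)$ and center $c$; let $(x',k)$ be the Angel's next revealed position and $c'$ the center after the $(k+1)^{\text{st}}$ deletion. Since the Angel moves like a king, $\abs{x'-x} \le 1$, so $\abs{c-x'} \le \abs{c-x} + \abs{x-x'} \le 2$. If $x' \le c$, then $\sigma_n$ deletes $\min A - 2$ and $c' = c-1$; combined with $0 \le c-x' \le 2$ this gives $c'-x' = (c-x')-1 \in \{-1,0,1\}$. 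If instead $x' > c$, then $x' \ge c+1$ by integrality, so $-2 \le c-x' \le -1$, while $\sigma_n$ deletes $\max A + 2$ and $c' = c+1$, giving $c'-x' = (c-x')+1 \in \{-1,0\}$. In either case $\abs{c'-x'} \le 1$, which is the statement at $k+1$. Since the inductive step is available whenever $k+1 \le \ceil{n/2}$, this yields the bound for every $k \le \ceil{n/2}$.

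The closing assertion of the lemma is then immediate. After the Devil's $k^{\text{th}}$ move the Angel has actually made $s + k - 1$ moves in total, but only the first $k-1$ have been revealed to the Devil; each of the $s$ hidden king-moves changes the $x$-coordinate by at most $1$, so the Angel's true $x$-coordinate differs from its last known value $x$ by at most $s$, and hence from $c$ by at most $s+1$.

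I do not expect a genuine obstacle here, since the substantive work is already contained in Lemma~\ref{lemon}; what remains is careful bookkeeping. The points to watch are: tracking indices correctly (the deletion producing a block of size $k+1$ is governed by the Angel's known position at height $k$, not height $k-1$); recording that in the first stage the center moves by \emph{exactly} $\pm 1$ each round, not merely by a bounded amount; and using the integrality of $c$ — a consequence of the even spacing established in Lemma~\ref{lemon} — to pin the two small cases down to intervals of length $\le 1$.
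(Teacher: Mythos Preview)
Your proof is correct and follows essentially the same induction as the paper: both argue that $|c - x'| \le 2$ via the triangle inequality and then use the case split from Lemma~\ref{lemon} to show the new center moves one unit toward $x'$. Your explicit use of the integrality of $c$ to pin down the ranges in each case is a small refinement over the paper's sign-based argument, and your final paragraph spelling out the $s+1$ bound is a detail the paper leaves implicit.
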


\begin{proof}
We'll prove this also by induction on $k$.

After our first move $(a_1, n)=(0, n)$ we have that $c=0$, and that the Angel's last known position is the square $(x, 0) = (0, 0)$.  

\[\abs{c-x} = 0 \leq 1.\]

For the inductive step, suppose $0<k<\ceil{n/2}$ and the Angel was on square $(x_1, k-1)$ with the center $c_1$ satisfying $\abs{c_1-x_1} \leq 1$.
Suppose the Angel was seen moving to $(x_2, k)$.  By the rules of the game, we have that $\abs{x_2 - x_1} \leq 1$. 
Thus we know
\[\abs{c_1 - x_2} \leq\abs{c_1 - x_1} + \abs{x_1 - x_2} \leq 2.\]

Let $B = A \cup \{a_{k+1}\}$ where $\{a_{k+1}\}$ is our $(k+1)^\text{st}$ move, then by Lemma~\ref{lemon} we have
\[a_{k+1} =
\begin{cases} 
\max A + 2 & x_2 > c_1\\
\min A - 2 & x_2 \leq c_1
\end{cases}.
\]
Let $c_2 = \frac{1}{2}(\min B + \max B)$, then we have
\[c_2-c_1 =
\begin{cases} 
-1 & (c_1 - x_2) \geq 0\\
1 & (c_1 - x_2) < 0
\end{cases}.\]
Thus, sgn($c_2 - c_1$) = -sgn($c_1 - x_2$) and $\abs{c_2 - c_1} = 1$,
and therefore, 
\[\abs{c_2-x_2} = \abs{(c_1 - x_2) -(c_2 - c_1)} = \abs{c_1 - x_2} - \abs{c_2 - c_1} \leq 2 - 1 = 1.
\]
and so we maintain that $\abs{c_2 - x_2} \leq 1$ after our $(k+1)^\text{st}$ move.
\end{proof}

The next Lemma is our main result involving the analysis of these strategies $\sigma_n$ as $n$ gets large.

\begin{lemma}\label{orange}
Suppose the Devil has followed $\sigma_n$ until round $\ceil{n/2}+k$ of the game, with $2 \leq k \leq \floor{\floor{n/2}/2}$ and then Angel's last known position was the square $(x_1, \ceil{n/2} + k)$, then after following $\sigma_n$ for one more round and deleting the $\ceil{n/2} + k+1^{\text{st}}$ square, all of the squares in the interval $[x_1 - (k-2), x_1+(k-2)]$ have been deleted.
\end{lemma}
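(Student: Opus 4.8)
The plan is to extract the shape of the deleted region from the two stages of $\sigma_n$ and then follow a single growing ``trap'' around the Angel's last known position through the rounds in question. By Lemma~\ref{lemon}, after the Devil's $\ceil{n/2}$-th move the deleted $x$-coordinates in row $n$ form an arithmetic progression of $\ceil{n/2}$ terms with common difference $2$, spanning $[a,a+2m]$ with $m=\ceil{n/2}-1$; by Lemma~\ref{lime} its center lies within $1$ of the Angel's known $x$-coordinate at that moment, hence within $2$ of $w_0$, where I write $w_j$ for the Angel's known $x$-coordinate once it has reached row $\ceil{n/2}+j$ (so $\abs{w_{j+1}-w_j}\le1$). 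For $j\ge0$ the Devil's $(\ceil{n/2}+j+1)$-th move is a second-stage move deleting the undeleted square closest to $w_j$, ties broken toward the side declared light; since a square at distance $0$ is always a legal choice, $w_j$ is deleted immediately afterward, and the deleted $x$-coordinates around it form a maximal run of consecutive integers $T_j=[p_j,q_j]$ with $w_j\in T_j$. Set $\lambda_j=w_j-p_j$ and $\rho_j=q_j-w_j$.

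The heart of the matter is to prove, by induction on $j$ for $0\le j\le k$, that $\lambda_j\ge j$ and $\rho_j\ge j$; then $[w_j-j,\,w_j+j]\subseteq T_j$ is deleted in its entirety. The base case holds since $\abs{T_0}=3$ and $w_0\in T_0$. For the step, note that $w_{j+1}\in[p_j-1,\,q_j+1]$, so the undeleted square nearest $w_{j+1}$ is the one just past whichever endpoint of $T_j$ lies closer; and — away from the ends of the progression — that endpoint is a term of the first-stage progression, so the square two steps beyond it is already deleted, and deleting the square just past the endpoint lengthens $T_j$ by $2$ on that side. A short case analysis on $w_{j+1}-w_j\in\{-1,0,1\}$ and on which side extends, with the light-side rule resolving the tie when $w_{j+1}$ is the exact midpoint of $T_j$, yields $\min(\lambda_{j+1},\rho_{j+1})\ge\min(\lambda_j,\rho_j)+1$, with the sole exception that if $\lambda_j=\rho_j$ and the Angel passes then the minimum stays put — but it already equals $j+1$ there. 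Either way $\min(\lambda_{j+1},\rho_{j+1})\ge j+1$. Taking $j=k$ gives $[w_k-k,\,w_k+k]\subseteq T_k$, and since $w_k=x_1$ this contains $[x_1-(k-2),\,x_1+(k-2)]$, which is the assertion.

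I expect the genuine work to be in the inductive step: keeping the parities straight and verifying that the endpoint being extended really is an interior term of the progression. That hypothesis can fail — if $T_j$ has already reached an end of the progression, the corresponding extension adds only $1$ rather than $2$ — but Lemma~\ref{lime}'s control on how far the Angel can have drifted from the progression's center, together with the progression's length $2\ceil{n/2}-1$ and the bound $k\le\floor{\floor{n/2}/2}$, forces this to happen at most twice during these rounds; each occurrence costs at most $1$ in the estimate, so one still gets $\min(\lambda_k,\rho_k)\ge k-2$, which is precisely why the conclusion is stated with $k-2$ in place of $k$.
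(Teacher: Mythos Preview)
Your approach is sound and genuinely different from the paper's. You track the evolving contiguous run $T_j$ around $w_j$ and prove an inductive lower bound on its two ``arms'' $\lambda_j,\rho_j$; the paper instead argues statically about the final configuration. It first shows the target interval $[x_1-(k-2),x_1+(k-2)]$ sits strictly inside the Angel's reachable cone $[L,R]=[x-\floor{n/2},x+\floor{n/2}]$ at row $n$, away from the four extreme squares $L,L+1,R-1,R$. Within that safe zone every pair of adjacent squares meets $A$, so the second-stage deletions $B$ restricted there can only occupy gaps of $A$; the paper then re-invokes the \emph{arguments} (not just the statements) of Lemmas~\ref{lemon} and~\ref{lime} to conclude that these $B$-squares form a step-$2$ progression centred near $x_1$ and long enough to fill every gap. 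Your run-tracking is arguably more direct and makes the role of the light-side tiebreak more visible; the paper's route avoids the step-by-step case analysis by reducing stage~2 to a replay of stage~1 on the odd sublattice.

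The one place your sketch is thinner than it should be is the boundary claim. ``At most twice'' is true, but not for the reason your phrasing suggests: once $T_j$ has passed an end of $A$, \emph{every} further extension on that side grows by only $1$, not $2$, so the number of degenerate steps is $k-j_0+1$, not bounded by an absolute constant. What actually saves you is that the first such step cannot occur before $j_0\ge k-1$: from $\abs{w_0-c}\le 2$, $p_j\ge w_0-2-2j$, and $2k\le 2\floor{\floor{n/2}/2}\le \floor{n/2}\le m+1$ one gets $p_j>\min A$ for all $j\le k-2$. Combined with the observation that during a degenerate step $\min(\lambda,\rho)$ does not decrease (you extend on the short side, and it stays put while the other arm grows), this yields $\min(\lambda_k,\rho_k)\ge \min(\lambda_{k-2},\rho_{k-2})\ge k-2$. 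Writing that inequality down would close the argument; the paper's slack of $2$ encodes exactly the same buffer, packaged as the exclusion of $L,L+1,R-1,R$ from the target interval.
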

\begin{proof}
Suppose $A$ is the set of $\ceil{n/2}$-many squares that we deleted in the first half of the game, and that the Angel passed through the square $(x, \ceil{n/2})$, with $R=x+\floor{n/2}$ and $L=x-\floor{n/2}$. Let $B$ be the set of $(k+1)$-many squares deleted in the latter half of the game.  First note that $\abs{x_1 -x} \leq k$, and so 
\begin{align*}
R- (x_1 +(k-2)) 
& \geq R-(x+2k-2) 
\\ & \geq R-(x+2\floor{\floor{n/2}/2}-2) 
\\ & \geq R-(x+\floor{n/2}-2) 
\\ & = 2.
\end{align*}
In particular, we have that $R, R-1 \not \in [x_1 -(k-2), x_1+(k-2)]$, and likewise for $L, L+1$. So we have that the interval $[x_1 -(k-2), x_1+(k-2)]$ lies well-inside of the Angel's cone of reachable squares from the position $(x, \ceil{n/2})$.

We note now that by Lemma~\ref{lemon}, the length of $[\min A, \max A]$ is $2\ceil{n/2}-2$, whereas the length of $[L, R]$ is $2 \floor{n/2}$.  And also note that by Lemma~\ref{lime}, the intervals $[\min A, \max A]$, $[L, R]$ are centered within distance $1$ from one another.  So if two squares in $[L, R]$ are adjacent and both not in $A$, then one of them must be to the left of $L+2$ or or to the right of $R-2$.  Given that none of $L, L+1, R-1, R$ are in $[x_1 -(k-2), x_1+(k-2)]$, no pair of squares in $B\cap[x_1 -(k-2), x_1+(k-2)]$ is adjacent. And so by the same arguments as those in Lemmas~\ref{lemon} and \ref{lime} regarding the squares in $A$, we can conclude that the squares in $B \cap [x_1 -(k-2), x_1+(k-2)]$ must satisfy the conclusions of Lemmas~\ref{lemon} and \ref{lime} as well.

Now by applying Lemma~\ref{lime} to $B$, noting that $\max B - \min B = 2k$,
we have $\abs{x_1-(\max B-k)} \leq 1$.  In particular, we have
\[x_1 +k - 2 \leq \max B\]
and likewise 
\[\min B \leq x_1 -k + 2\]
and so $B$ contains every odd square in $[x_1 -(k-2), x_1+(k-2)]$, i.e. every square in this interval is either in $A$ or in $B$.
\end{proof}

\begin{figure} 
\begin{tikzpicture}[scale=0.15, every node/.style={scale=0.2}]
\draw [fill=black!45!white, draw=none] (-3,13) rectangle (16, 14);
\draw [fill=black!65!white, draw=none] (-10,13) rectangle (-9, 14);
\draw [fill=black!65!white, draw=none] (-8,13) rectangle (-7, 14);
\draw [fill=black!65!white, draw=none] (-6,13) rectangle (-5, 14);
\draw [fill=black!65!white, draw=none] (-4,13) rectangle (-3, 14);
\draw [fill=black!65!white, draw=none] (-2,13) rectangle (-1, 14);
\draw [fill=black!65!white, draw=none] (0,13) rectangle (1, 14);
\draw [fill=black!65!white, draw=none] (2,13) rectangle (3, 14);
\draw [fill=black!65!white, draw=none] (4,13) rectangle (5, 14);
\draw [fill=black!65!white, draw=none] (6,13) rectangle (7, 14);
\draw [fill=black!65!white, draw=none] (8,13) rectangle (9, 14);
\draw [fill=black!65!white, draw=none] (10,13) rectangle (11, 14);
\draw [fill=black!65!white, draw=none] (12,13) rectangle (13, 14);
\draw [fill=black!65!white, draw=none] (14,13) rectangle (15, 14);
\draw [fill=black!65!white, draw=none] (16,13) rectangle (17, 14);
\draw [fill=black!5!white, draw=none] (-2,12) rectangle (15, 13);
\draw [fill=black!5!white, draw=none] (-1,11) rectangle (14, 12);
\draw [fill=black!5!white, draw=none] (0,10) rectangle (13, 11);
\draw [fill=black!5!white, draw=none] (1,9) rectangle (12, 10);
\draw [fill=black!5!white, draw=none] (2,8) rectangle (11, 9);
\draw [fill=black!5!white, draw=none] (3,7) rectangle (10, 8);
\draw [fill=black!5!white, draw=none] (4,6) rectangle (9, 7);
\draw [fill=black!5!white, draw=none] (5,5) rectangle (8, 6);

\draw [step=1 cm, draw=black!50!white, thin] (-19,-25) grid (20, 15);
\node at (6.5,4.5) {\boardfont k}; 
\draw [black, ->] plot [smooth, tension=1] coordinates {(0.5, -24.5) (-1.5, -10.5) (4.5, -2.5) (6.5, 4.25)};
\end{tikzpicture}
\caption{The wall in place in round $\ceil{n/2}+\floor{\floor{n/2}/2}$.}\label{fig:3/4n}
\end{figure}

\begin{remark}\label{melon}
In any later round $\ceil{n/2}+k > \ceil{n/2}+\floor{\floor{n/2}/2}$, $\sigma_n$ is simply deleting the closest square to the Angel's last known $x$-coordinate which hasn't been deleted yet, and maintains that if the Angel was last seen on $(x_1, \ceil{n/2}+k)$, then all of the squares in $[x_1 - (\floor{\floor{n/2}/2}-2), x_1+(\floor{\floor{n/2}/2}-2)]$ have been deleted by the end of this round.  See Figure~\ref{fig:3/4n}.
\end{remark}

\begin{lemma}\label{pear}
For sufficiently large $n$, $\sigma_n$ is a winning strategy in the game where the Angel is $s$-sneaky, but must always increase the $y$-coordinate of the Angel piece.
\end{lemma}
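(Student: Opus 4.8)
The plan is to exploit the quantitative gain recorded in Lemma~\ref{orange} and Remark~\ref{melon}: the block of deleted squares that $\sigma_n$ maintains in row $n$ has half-width of order $n/4$ about the Angel's last known $x$-coordinate, and for large $n$ this dwarfs the fixed delay $s$, so the $s$-sneaky upward Angel cannot slip past the wall however it plays.

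Step~1 is bookkeeping. When the Devil commits to $\sigma_n$, three quantities advance in lockstep: the number of squares deleted, the $y$-coordinate of the Angel's \emph{last known} position, and the $y$-coordinate of the Angel's \emph{true} position. Because the restricted Angel raises its $y$-coordinate by exactly one on every move and the information lag is $s$ rounds, the true row always exceeds the last known row by exactly $s$ (as already noted at the end of Lemma~\ref{lime}), and the number of deletions tracks the last known row up to a bounded additive constant coming from the lopsided first round, in which the Angel makes $s$ moves before the Devil's first, blind, deletion. I would record the consequence actually needed: at the first moment the Angel truly occupies some square $(X',n-1)$ of row $n-1$, its last known position is a square $(x_1,n-1-s)$ with $\abs{X'-x_1}\le s$ (each of the $s$ intervening moves shifts the $x$-coordinate by at most one), and, for $n$ large, the last known $y$-coordinate $n-1-s$ already exceeds $\ceil{n/2}+\floor{\floor{n/2}/2}$.

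Step~2 applies Remark~\ref{melon}, which packages Lemma~\ref{orange} with the description of $\sigma_n$'s later rounds: since $n-1-s>\ceil{n/2}+\floor{\floor{n/2}/2}$, every square of row $n$ whose $x$-coordinate lies in $[x_1-W,\,x_1+W]$ has already been deleted, where $W=\floor{\floor{n/2}/2}-2$, a quantity of order $n/4$. For $n$ large enough that $W\ge s+1$ (a threshold linear in $s$ suffices), the bound $\abs{X'-x_1}\le s$ forces the three squares $(X'-1,n)$, $(X',n)$, $(X'+1,n)$ into $[x_1-W,\,x_1+W]$, so all three are deleted; but those are exactly the squares to which the Angel, sitting at $(X',n-1)$ and obliged to increase its $y$-coordinate, could move, so it has no legal move and the Devil wins. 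To finish I would note that the Angel really is driven into this situation: $\sigma_n$ deletes only squares of row $n$, so every square of rows $0,\dots,n-1$ stays intact, and the Angel's strictly increasing $y$-coordinate (with $n-1\ge s$ for large $n$) carries it onto row $n-1$ after finitely many moves without having been trapped earlier. Since this argument runs against every legal play of the restricted Angel, $\sigma_n$ is a winning strategy for all sufficiently large $n$.

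The only delicate point is the bookkeeping of Step~1 — fixing the exact offsets among true row, last known row, number of deletions, and the round index used in Lemmas~\ref{lemon}, \ref{lime}, \ref{orange} and Remark~\ref{melon}, in the presence of the anomalous first round. Once that dictionary is in place, what remains is the single comparison $W\ge s+1$ together with a choice of threshold for $n$; the other hypotheses (that $\sigma_n$ uses only information available to the Devil, that the deleted region is a contiguous block, and that the Devil never runs out of squares of row $n$) are immediate from the definition of $\sigma_n$ and the earlier lemmas.
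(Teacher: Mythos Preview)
Your proposal is correct and follows essentially the same approach as the paper: invoke Lemma~\ref{orange} and Remark~\ref{melon} to obtain a deleted interval of half-width $W=\floor{\floor{n/2}/2}-2$ about the last known $x$-coordinate, then observe that the true position differs from the last known by at most $s$, so $W\ge s$ (the paper's threshold; your $W\ge s+1$ is an immaterial variant) forces the three squares above the Angel on row $n$ to be deleted. Your write-up is considerably more explicit about the bookkeeping than the paper's two-line proof, but the underlying argument is the same.
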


\begin{proof}
By Lemma~\ref{orange} and Remark~\ref{melon}, if $n$ is large enough, i.e. if $\floor{\floor{n/2}/2}-2 \geq s$, then $\sigma_n$ will trap the sneaky Angel which is restricted to moving upwards.
\end{proof}

We will now consider another variant of the game in which the sneaky Angel is free to move also to the left or right, but still cannot decrease its $y$-coordinate. In other words, the Angel will be able to move ``side-to-side'' as in Figure~\ref{stsmove}.

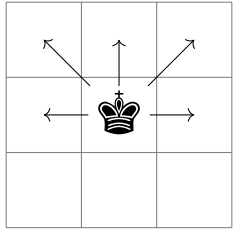
\begin{figure} 
\begin{tikzpicture}
\def\gridlength{2}
\def\gridheight{2}
\draw [step=1 cm, draw=black!50!white, thin, fill=none] (-1,-1) grid (\gridlength, \gridheight);

\node (11) [inner sep=0.05cm] at (0.5, 0.5) {\boardfont k}; 
\coordinate (01) at (-0.5, 0.5);
\coordinate (21) at (1.5, 0.5);
\coordinate (02) at (-0.5, 1.5);
\coordinate (12) at (0.5, 1.5);
\coordinate (22) at (1.5, 1.5);

\draw [->] (11) -- (01);
\draw [->] (11) -- (21);
\draw [->] (11) -- (02);
\draw [->] (11) -- (12);
\draw [->] (11) -- (22);

\end{tikzpicture}
\caption{Legal moves for side-to-side Angel}\label{stsmove}
\end{figure}
We will show that we can capture the sneaky Angel in this variant of the game, provided we have somehow already positioned walls out to the left and right of the Angel.  More formally, we will show that we can capture the Angel in the game where, for some large $m$, we begin the game with the squares of the form $(\pm m, k)$ for $0 \leq k \leq n$ already deleted, as in Figure~\ref{fig:stswalls}.  When we address the fully unrestricted sneaky Angel, we will need to begin play in such a way to simulate this head start.

\begin{figure} 
\begin{tikzpicture}[scale=0.2, every node/.style={scale=0.2}]
\draw [fill=black!65!white, draw=none] (-19,0) rectangle (-18, 10);
\draw [fill=black!65!white, draw=none] (19,0) rectangle (20, 10);

\draw [step=1 cm, draw=black!50!white, thin] (-19,0) grid (20, 15);
\node at (0.5,0.5) {\boardfont k}; 
\node [scale=4] at (-18.5,-0.75) {$-m$}; 
\node [scale=4] at (19.5,-0.75) {$m$}; 
\node [scale=4] at (-19.75,9.5) {$n$}; 

\end{tikzpicture}
\caption{Walls in place at the start of the side-to-side Angel game.}\label{fig:stswalls}
\end{figure}
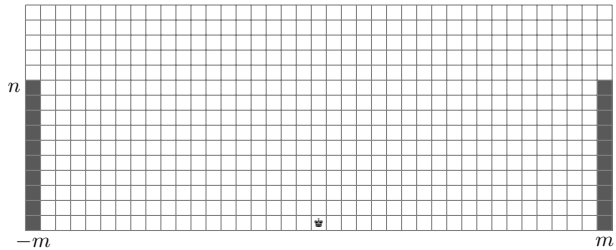

We define a variant of $\sigma_n$, which we will call $\hat \sigma_n$.  We will have $\hat \sigma_n$ play as though the Angel has moved upwards in every move, i.e. play $\sigma_n$'s response to the last known $x$-coordinate of the Angel, and ignore the actual $y$-coordinate.  Once the entirety of row $n$ between the walls has been deleted, $\hat\sigma_n$ will just fill in the rectangle $(-m, m) \times [0, n)$.

\begin{lemma}\label{apple}
For sufficiently large $n$, $\hat \sigma_n$ is a winning strategy in the game where the Angel is $s$-sneaky, but may never decrease the $y$-coordinate of the Angel piece.
\end{lemma}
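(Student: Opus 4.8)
The plan is to reduce the side-to-side game to the upward-only game already handled in Lemma~\ref{pear}. First I would record the following correspondence: given any play of the side-to-side Angel, let $x_t$ be the $x$-coordinate of the Angel after its $t$-th move; since consecutive positions of a king differ by at most $1$ in each coordinate, the sequence of points $(x_t, t)$ is a legal play of an upward-only Angel (call it the \emph{shadow} Angel), carrying the same $s$-round information lag. By the definition of $\hat\sigma_n$, the squares it deletes are exactly the squares $\sigma_n$ would delete against this shadow Angel. Consequently Lemmas~\ref{lemon}, \ref{lime}, \ref{orange} and Remark~\ref{melon} apply directly and describe which squares of row $n$ have been deleted after each round, where "the Angel's last known position" is now read as the (lagged) last known position of the real side-to-side Angel, and $\sigma_n$'s internal round index is identified with $\hat\sigma_n$'s deletion count up to a fixed additive constant $c$ coming from the bookkeeping of the sneaky game.

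Next I would choose $n$ large, relative to both $s$ and $m$, so that (i) $\floor{\floor{n/2}/2} - 2 \geq 2m$ and (ii) $s + \ceil{n/2} + \floor{\floor{n/2}/2} + c < n$; both hold once $n$ is sufficiently large, and this is the meaning of "sufficiently large $n$" here. Put $\rho = \ceil{n/2} + \floor{\floor{n/2}/2} + 1$. By Remark~\ref{melon}, applied through the shadow Angel, at the end of round $\rho$ the interval $[x_1 - (\floor{\floor{n/2}/2} - 2), x_1 + (\floor{\floor{n/2}/2} - 2)] \times \{n\}$ has been deleted, where $x_1$ is the real Angel's last known $x$-coordinate at that round. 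The Angel is always strictly between the walls, so $\abs{x_1} \leq m - 1$, and then (i) forces this interval to contain every square $(x, n)$ with $-m + 1 \leq x \leq m - 1$; together with the pre-deleted wall squares $(\pm m, n)$, all of row $n$ between the walls is gone by the end of round $\rho$. By (ii) the Angel has made fewer than $n$ moves by that round, hence is still at a $y$-coordinate below $n$; and it can never climb to row $n$ afterwards, since a step from row $n-1$ to row $n$ would land on some $(x', n)$ with $-m \leq x' \leq m$, all of which are now deleted. So from round $\rho$ on, the Angel is confined to the finite region $\{-m+1, \dots, m-1\} \times \{0, 1, \dots, n-1\}$: it cannot cross the side walls, cannot go below $y = 0$, and cannot reach row $n$.

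Finally, once row $n$ between the walls is fully deleted, $\hat\sigma_n$ enters its second phase and deletes the squares of $(-m, m) \times [0, n)$ one per round; after at most $(2m - 1)n$ further rounds every square of this finite box — and hence every legal neighbour of every square in it, since all such neighbours lie in the box, on the side walls, or in the already-deleted row $n$ — has been deleted, so the Angel has no legal move and the Devil wins. I expect the main obstacle to be confirming that this second phase really does end the game, i.e. that the first phase terminates at a controlled round (before the Angel can reach row $n$) rather than dragging on forever while the Angel camps near one wall; this is exactly what condition (i) prevents, by forcing the mature wall guaranteed by Remark~\ref{melon} to span the entire corridor at once. The delicate point throughout is the interlocking bookkeeping among the $s$-round lag, $\sigma_n$'s internal round count, and the Devil's actual move count, which must be arranged so that "row $n$ between the walls is complete" and "the Angel has not yet climbed to row $n$" hold at the same moment.
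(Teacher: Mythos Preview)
Your argument is correct for the lemma as literally stated, but it follows a different route from the paper's. You pick $n$ large relative to both $s$ and $m$ (condition~(i): $\floor{\floor{n/2}/2}-2\ge 2m$), so that at the single round $\rho$ the deleted interval on row~$n$ already spans the entire corridor $[-m+1,m-1]$; from that moment the Angel is boxed in and the filling phase ends the game. The paper instead chooses $n$ large only relative to $s$ (namely $\floor{\floor{n/2}/2}-2\ge s$) and argues dynamically: by Remark~\ref{melon} the deleted interval on row~$n$ tracks the Angel's last known $x$-coordinate with radius $\floor{\floor{n/2}/2}-2\ge s$, so the Angel's \emph{actual} $x$-coordinate always sits under already-deleted squares and it can never step onto row~$n$; since the side walls confine the Angel to $|x|\le m-1$, the row-$n$ wall is eventually completed and then the filling phase finishes.

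Your version is cleaner at the level of this lemma---once the top is complete there is nothing more to argue---but it buys that simplicity by coupling $n$ to $m$, whereas the paper's tracking argument decouples them. This difference matters downstream: in the proof of Theorem~\ref{mainthm} the rectangular regions have side walls at distance about $9n$, i.e.\ effectively $m\approx 9n$, and your condition~(i) would then read $\floor{\floor{n/2}/2}-2\ge 18n$, which is impossible. So your proof establishes Lemma~\ref{apple} (with $m$ fixed), but not the uniform-in-$m$ strength that the main theorem actually uses.
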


\begin{proof}
By Lemma~\ref{orange} and Remark~\ref{melon}, after round $\ceil{n/2}+\floor{\floor{n/2}/2}$, we have an fully deleted an interval of squares on row $n$ of radius $\floor{\floor{n/2}/2}-2$ around the Angel's last known $x$-coordinate. The Angel certainly cannot escape upwards, provided $n$ is large enough so that $\floor{\floor{n/2}/2}-2 \geq s$.  By the assumption that we start the game with walls already in place to the left and right, the sneaky Angel can only exit the rectangle by moving upwards.  So eventually $\sigma_n$ will finish the top wall of the rectangle, and will do so early enough that the Angel couldn't have moved through the wall, and so is still trapped inside of the rectangle.
\end{proof}

We will now apply these lemmas to answer the question in the version of the game in which the Angel is free to move in any direction, and prove Theorem~\ref{mainthm}.

\begin{proof}[Proof of Theorem~\ref{mainthm}]
We will define a strategy $\Sigma_n$, which we will show captures the unrestricted $s$-sneaky Angel, provided $n$ is sufficiently large.

\begin{figure} 
\begin{tikzpicture}[scale=0.1, every node/.style={scale=0.1}]
\draw [fill=black!65!white, draw=none] (-29,23) rectangle (-22, 24);
\draw [fill=black!65!white, draw=none] (-22,23) rectangle (-23, 30);

\draw [fill=black!65!white, draw=none] (-29,-22) rectangle (-22, -23);
\draw [fill=black!65!white, draw=none] (-22,-22) rectangle (-23, -29);

\draw [fill=black!65!white, draw=none] (30,-22) rectangle (23, -23);
\draw [fill=black!65!white, draw=none] (23,-22) rectangle (24, -29);

\draw [fill=black!65!white, draw=none] (30,23) rectangle (23, 24);
\draw [fill=black!65!white, draw=none] (23,23) rectangle (24, 30);

\draw [fill=black!15!white, draw=none] (23, -22) rectangle (30,23); 
\draw [fill=black!15!white, draw=none] (23, -22) rectangle (-22,-29); 
\draw [fill=black!15!white, draw=none] (-22, -22) rectangle (-29,23); 
\draw [fill=black!15!white, draw=none] (-22, 23) rectangle (23,30); 

\draw [step=1 cm, draw=black!40!white, thin] (-29,-29) grid (30, 30);
\node at (0.5,0.5) {\boardfont k}; 

\end{tikzpicture}
\caption{How to trap an Angel.}\label{fig:trapbox}
\end{figure}
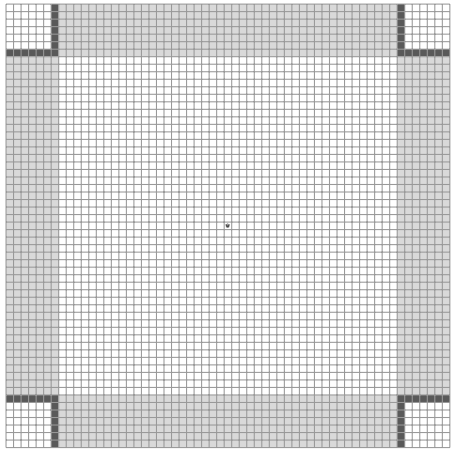

We define $\Sigma_n$ as follows:  First delete four corners of a large square box (see Figure~\ref{fig:trapbox}.  This will create four rectangular regions in the cardinal directions.  If the Angel attempts to escape through one of these regions, then we will switch strategies and appeal to Lemma~\ref{apple}.  More precisely, use the first $8n+4$ rounds of the game to delete the squares of the form $(x, y)$ where $x=a(9n+k)$ and $y=b(9n)$ or $x=a(9n)$ and $y=b(9n+k)$ where $a, b \in \setof{-1, 1}$ and $0 \leq k \leq n$, of which there are exactly $8n+4$.  Note that by the choice of these squares, the Angel must still be in the interior of the box $[-9n, 9n]^2$, since they have only moved $8n+4+s$ times so far, and if $\floor{\floor{n/2}/2}-2 \geq s$, and so $4+s \leq \floor{\floor{n/2}/2}+2 \leq n$, so that $8n+4+s\leq 9n$.

Now that we have these corners in place, we will have $\Sigma_n$ methodically delete all of the squares in $[-10n, 10n]^2$, so long as the Angel stays in the interior of the box $[-9n, 9n]^2$.  If the Angel's last known position ever moves out of $[-9n, 9n]^2$, it must be into one of the rectangular regions enclosed by the walls we've put into place.  Once the Angel enters one of these regions, we play essentially by the strategy $\hat\sigma_n$, again ignoring the Angel's forward progress (``forward'' depending on which of the four rectangular regions, e.g. upward, if the Angel is in the upper rectangle), and just playing in response to the Angel's (last known) relative side-to-side position inside of the rectangle. By Lemma~\ref{apple}, the Angel can't escape to the outside of the rectangular region.  If the Angel stays in the rectangle forever, then by following $\hat \sigma_n$, we will eventually finish the outer wall of the rectangle.  At this point, we switch to filling in the rectangular region containing the Angel's last known position, which must eventually drive the Angel back out of the rectangular region and into the box $[-9n, 9n]^2$.  If the Angel exits the rectangle before we finish the outer wall, or is forced out by the deletion of the entire rectangle, we switch back to filling in $[-10n, 10n]^2$ so long as the Angel's last known position is in $[-9n, 9n]^2$.  If the Angel visits (or revisits) another of the four rectangular regions, we do as prescribed above and follow $\hat \sigma_n$ to prevent the Angel from exiting to the outside of the rectangle.  We continue playing in this manner until the entirety of $[-10n, 10n]^2$ is filled in, at which point the Angel certainly has no legal moves and loses.

\end{proof}

\bibliographystyle{amsplain}
\bibliography{bibliography}

\end{document}